\newcommand{\nc}[1]{\newcommand{#1}}
\nc{\on}[1]{\operatorname{#1}}
\nc{\QQ}[0]{\mathbb Q}
\nc{\End}[0]{\on{End}}
\nc{\Br}[0]{\on{Br}}
\newtheorem{theorem}{Theorem}[section]
\newtheorem{proposition}[theorem]{Proposition}
\newtheorem{lemma}[theorem]{Lemma}
\nc{\ord}[0]{\on{ord}}
\title{Abelian varieties that split modulo all but finitely many primes}
\author{Enric Florit}
\address{Departament de Matemàtiques i Informàtica, Universitat de Barcelona, Gran via de les Corts Catalanes, 585, 08007 Barcelona, Spain}
\email{enricflorit@ub.edu}
\date{April 11, 2024}
\begin{document}

\begin{abstract}
Let $A$ be a simple abelian variety over a number field $k$ such that $\End(A)$ is noncommutative. We show that $A$ splits modulo all but finitely many primes of $k$. We prove this by considering the subalgebras of $\End(A_{\mathfrak p})\otimes\QQ$ which have prime Schur index. Our main tools are Tate's characterization of endomorphism algebras of abelian varieties over finite fields, and a Theorem of Chia-Fu Yu on embeddings of simple algebras.
\end{abstract}

\maketitle

\section{Introduction}

Let $k$ be a number field and let $A$ be a simple abelian variety over $k$. Let $\End^0(A):= \End(A)\otimes \QQ$ be the algebra of endomorphisms of $A$ defined over $k$. For a prime $\mathfrak p$ of good reduction for $A$, we denote by $A_{\mathfrak p}$ the reduction of $A$ modulo $\mathfrak p$. We say $A_{\mathfrak p}$ splits if it is isogenous (over the residue field of $\mathfrak p$) to a product of abelian varieties of lower dimension. In this note we show the following.

\begin{theorem}\label{theorem:main}
	Suppose $\End^0(A)$ is noncommutative. Then, for every prime $\mathfrak p$ of $k$ of good reduction for $A$ coprime to all primes of ramification of $\End^0(A)$, the reduction $A_{\mathfrak p}$ splits. In particular, $A_{\mathfrak p}$ splits for all but finitely many primes $\mathfrak p$.
\end{theorem}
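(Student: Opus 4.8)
The plan is to prove the contrapositive prime by prime. Fix a prime $\mathfrak p$ of $k$ of good reduction for $A$ and coprime to the ramified primes of $\End^0(A)$, and suppose that $A_{\mathfrak p}$ does not split; then over its residue field $A_{\mathfrak p}$ is isogenous to a simple abelian variety, so $D:=\End^0(A_{\mathfrak p})$ is a division algebra, and the reduction map gives an embedding $\End^0(A)\hookrightarrow D$. I will derive a contradiction by exhibiting inside $\End^0(A)$ a division subalgebra of prime Schur index whose ramification is incompatible, via Tate's theorem and Yu's theorem, with its image lying in $D$.

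\emph{Constructing the subalgebra.} Let $L$ be the center of $\End^0(A)$ and $e\ge 2$ its Schur index, and fix a prime $\ell\mid e$. Splitting $\End^0(A)$ into primary components realizes its $\ell$-primary component $D_{\ell}$ as a tensor factor, hence as a central division subalgebra of index $\ell^{a}$ with $a=\ord_{\ell}(e)$; since $D_{\ell}$ is cyclic over the number field $L$, it contains a subfield $M$ with $[M:L]=\ell^{a-1}$, and $B:=C_{D_{\ell}}(M)$ is then a central division algebra over $M$ of Schur index exactly $\ell$, with Brauer class the $\ell$-primary part of $\on{res}_{M/L}[\End^0(A)]$. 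In particular $B$ is unramified at every place of $M$ above $p$, because $\End^0(A)$ is, by the choice of $\mathfrak p$; and since its index is $\ell\ge 2$, $B$ is ramified at some place $w_0$ of $M$ not above $p$. If $\End^0(A)$ has any finite ramification one arranges $w_0$ to be finite (this takes some care in the choice of $\ell$ and of the subfield $M$, via a suitable splitting field of $\End^0(A)$); otherwise $\End^0(A)$ — being noncommutative and unramified at all finite places — is a totally definite quaternion algebra over a totally real field of even degree, and $w_0$ is a real place.

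\emph{Obstructing the embedding.} Since $\QQ(\pi)$ is central in the division algebra $D$, the subring $N:=M\cdot\QQ(\pi)\subseteq D$ is a field, and $B\otimes_M N$ is contained in the centralizer $C_D(N)$, which — being the centralizer of a subfield inside a division algebra — is again a division algebra; hence $B\otimes_M N$ is a division algebra of Schur index exactly $\ell$, and its class $\on{res}_{N/M}[B]$ is ramified at some place $u_0$ of $N$ above $w_0$. Now invoke Tate: the invariants of $D=\End^0(A_{\mathfrak p})$ vanish at every finite place of $\QQ(\pi)$ not above $p$, and $D$ is ramified at an archimedean place only when $\pi$ is a real Weil number, in which case $\QQ(\pi)$ is $\QQ$ or a real quadratic field and all invariants lie in $\{0,\tfrac12\}$, forcing $\on{ind} D\le 2$. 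If $w_0$ is finite, then $C_D(N)$, whose class is $\on{res}_{N/\QQ(\pi)}[D]$, is split at $u_0$, so applying Yu's theorem to the embedding $B\otimes_M N\hookrightarrow C_D(N)$ of central simple $N$-algebras contradicts the local condition at $u_0$. If $w_0$ is real, then $u_0$ is a real place of $N$ — an index-$2$ class survives restriction to a local extension of $\mathbb R$ only if that extension is $\mathbb R$ itself — so $\QQ(\pi)\subseteq N$ has a real place, whence $\on{ind} D\le 2$ and $2\dim A_{\mathfrak p}=(\on{ind} D)\,[\QQ(\pi):\QQ]\le 4$, i.e.\ $\dim A_{\mathfrak p}\le 2$, contradicting $\dim A_{\mathfrak p}=\dim A\ge 2$ (indeed $\ge 4$ in this case). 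Either way we reach a contradiction, and only finitely many primes have been excluded: those of bad reduction and the finitely many above the ramified primes of $\End^0(A)$.

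\emph{The main obstacle.} The crux is the finite-place step. A division algebra over a local field embeds into any sufficiently large matrix algebra over that field, so no contradiction can come from a dimension count alone; it must come from the fact that $B\otimes_M N$ has Schur index \emph{exactly} $\ell$ — forced because it sits inside a division algebra — together with the precise local invariants of $\End^0(A_{\mathfrak p})$ supplied by Tate, which pin down $\deg C_D(N)$ at $u_0$ and make the embedding condition of Yu's theorem fail there. Carrying this out, and in tandem arranging the choices of $\ell$, $M$ and $w_0$ in the construction so that $B$ genuinely ramifies away from $p$, is where the real work lies; the totally definite quaternion case, by contrast, is disposed of purely by the archimedean argument above.
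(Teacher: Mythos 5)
Your overall strategy is the same as the paper's (a prime-Schur-index subalgebra of $\End^0(A)$, the reduction embedding, Yu's theorem plus Tate's description of the ramification of $\End^0(A_{\mathfrak p})$), and your two technical variants are legitimate: constructing $B=C_{D_\ell}(M)$ via the primary decomposition and a centralizer avoids the paper's Lemma on prime subalgebras, and passing to $C_D(N)$ reduces Yu's criterion to the equal-center case. The intended local obstruction is in fact correct: writing $X=B\otimes_M N$ and $Y=C_D(N)$, both central simple over $N$ with $Y$ a division algebra, Yu's criterion for the embedding $X\hookrightarrow Y$ amounts to $\ell\cdot\ord_N([Y]-[X])\mid \ord_N[Y]$, while $\ord_N[Y]$ always divides $\operatorname{lcm}(\ord_N([Y]-[X]),\ell)$; these force $\ord_N([Y]-[X])$ to be prime to $\ell$, which is contradicted by any place $u_0$ where $X$ ramifies and $Y$ splits. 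But you never carry out this computation --- you explicitly defer it (``where the real work lies''), and it is precisely the content of the paper's capacity computation and embedding theorem. As written, the crux of the argument is asserted, not proved.

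The second, more substantive gap is in producing $u_0$ and in the ensuing case analysis. Your claim that $\on{res}_{N/M}[B]$ ramifies at a place of $N$ \emph{above $w_0$} is false in general: the invariant at $w_0$ dies whenever $\ell$ divides every local degree $[N_u:M_{w_0}]$. What is true is only that the class $\on{res}_{N/M}[B]$, having order exactly $\ell$, ramifies at some place of $N$ lying over some ramified place of $B$, hence at a place coprime to $p$; for $\ell=2$ that surviving place may be real even when $\End^0(A)$ has finite ramification, so your dichotomy (finite $w_0$ versus ``$\End^0(A)$ totally definite over a totally real field of even degree'') does not exhaust the cases. In the leaked case the real place of $N$ forces $\QQ(\pi)$ to be real, hence $\dim A=2$, $M=L=\QQ$ and $\End^0(A)$ a \emph{definite} quaternion algebra over $\QQ$; there Tate gives invariant $\tfrac12$ for $\End^0(A_{\mathfrak p})$ at the real place as well, so no local comparison yields a contradiction, and your dimension count ($\dim A\ge 4$) does not apply since the center is $\QQ$. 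Excluding this configuration requires Shimura's theorem that quaternionic multiplication on an abelian surface is indefinite --- exactly the input of the paper's Proposition handling $\dim A=2$ --- which you never invoke. (Also, the parenthetical ``$\ge 4$'' in your real-place branch silently uses the Albert/Type~III restriction $2[L:\QQ]\mid\dim A$; that is fine, but it should be cited.) So: right skeleton, but two concrete holes --- the unproved Yu/capacity step at $u_0$, and the unhandled definite quaternion surface case created by the unjustified ``above $w_0$'' claim.
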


This result generalizes the analogous theorem for abelian surfaces with QM by Morita and Yoshida \cite{morita70, yoshida73}. The proof of Theorem~\ref{theorem:main} is guided by 2-dimensional case (see Proposition~\ref{proposition:dim-2} below). There, the isomorphism
\begin{equation}\label{eq:1}
	\End^0(A_{\mathfrak p})\simeq \End^0(A)\otimes\QQ(\pi)
\end{equation}
holds, with $\pi$ the Frobenius endomorphism of $A_{\mathfrak p}$. The characterization of the ramified places of $\End^0(A_{\mathfrak p})$ by Tate gives the ramification condition on $\End^0(A)$. 

To generalize to higher dimension, we need to find a suitable replacement of the isomorphism in \eqref{eq:1}, which does not hold in general.\footnote{However, this isomorphism does hold at a set of primes of density 1 under the Mumford-Tate conjecture. See \cite[Theorem~1.2]{zywina14} for more details.}
Instead, we work with classes in a suitable Brauer group. After extending scalars, we are able to compare the classes $[\End^0(A)]$ and $[\End^0(A_{\mathfrak p})]$, this is enough to make statements about ramification. In order to compare these classes, we study the subalgebras of $\End^0(A_{\mathfrak p})$ that have prime Schur index (recall that the Schur index of a central simple algebra $X$ over a number field $Z$ is the order of the class $[X]$ in the Brauer group $\Br(Z)$). This is the content of Theorem~\ref{theorem:main-theorem-algebras}, which is our main technical result. In short, our work studies the endomorphism subalgebras of simple abelian varieties defined over finite fields.

Some work remains to be done regarding the finite set of primes $\mathfrak p$ such that $A_{\mathfrak p}$ is simple. If $A$ is a surface with QM and $\End^0(A)$ ramifies at the rational prime $p$ below $\mathfrak p$, then $A_{\mathfrak p}$ has $p$-rank 0, so it is supersingular, and in particular it is geometrically split. This argument breaks down if $\dim A>2$ for at least two reasons. First, the $p$-rank can take different values depending on the dimension of $\End^0(A)$ relative to $\dim A$. Second, having $p$-rank 0 does not imply supersingularity in higher dimension \cite[pg.~9]{lioort98}. At the time of writing, the author does not know of any abelian variety $A$ with noncommutative endomorphism algebra such that, for a prime of good reduction $\mathfrak p$, $A_{\mathfrak p}$ is geometrically simple.

Theorem~\ref{theorem:main} was known to hold for primes $\mathfrak p$ of $k$ with prime residue field, which form a set of density 1. This is \cite[Lemma~2.6]{achter09} and \cite[Lemma~2.1]{zywina14}, who in turn use \cite[Theorem~6.1]{waterhouse69}. The proof for these primes uses the well-known formula by Tate that computes the local invariants of $\End^0(A_{\mathfrak p})$ from the Frobenius endomorphism $\pi$, but it does not generalize to other primes $\mathfrak p$. Achter and Zywina also show (conditionally on the Mumford-Tate conjecture) that --after possibly replacing $k$ with a finite extension-- an abelian variety $A$ with commutative $\End(A_{\bar k})$ remains simple modulo all primes $\mathfrak p$ in a set of density 1, as conjectured by Murty and Patankar in \cite{murty08}. This situation seems qualitatively different from ours: in some cases \cite{shankar-tang20}, the (density zero) set of primes $\mathfrak p$ where $A_{\mathfrak p}$ splits is infinite.

The remainder of this document is organized as follows. In Section~\ref{section:embeddings} we first recall a theorem of Yu, which gives us a criterion to work with embeddings of simple algebras. We then show the existence of subalgebras of prime Schur index in Section~\ref{section:prime-subalgebra}. We prove Theorem~\ref{theorem:main-theorem-algebras} in Section~\ref{section:theorem-algebras}, this characterizes all subalgebras of a division algebra having prime Schur index. Finally, we prove Theorem~\ref{theorem:main} in Section~\ref{section:proof-of-main-theorem}.

We refer the reader to \cite{pierce82} for the missing background on central simple algebras (particularly, Chapters 9, 12, 13 and 18).

\subsection*{Notation}

All algebras that appear are finite-dimensional over $\QQ$. In particular, every field is a number field. If $X$ is a simple algebra, $\mathrm{M}_n(X)$ denotes the algebra of $n$-by-$n$ matrices with entries in $X$. Every simple algebra $X$ has an opposite algebra, denoted by $X^{op}$, whose set is the same as $X$ and whose multiplication is reversed.

\subsection*{Acknowledgements}

I wish to thank Francesc Fité and Xavier Guitart for their guidance and comments during this project. I also thank Javier Guillán-Rial for some fruitful discussions on key lemmas. This work was supported by the Spanish Ministry of Universities (FPU20/05059) and by projects PID2019-107297GB-I00, PID2022-137605NB-I00 and 2021 SGR 01468.

\section{Embeddings of simple algebras}\label{section:embeddings}

Let $Q$ be a number field. Let $X$ and $Y$ be simple finite-dimensional $Q$-algebras, possibly with larger centers. Recall that an \emph{embedding of $Q$-algebras} $\iota\colon X\to Y$ is an injective ring homomorphism which is $Q$-linear. In particular, we have $\iota(1)=1$ and $\iota(qx)=q\iota(x)$ for all $q\in Q$ and all $x\in X$. Sometimes we also say $\iota$ is an \emph{embedding} when it is an embedding of $\QQ$-algebras, without any implication for the linearity with respect to a larger field.

Given a simple algebra $X$, by Wedderburn's structure theorem \cite[3.5~Theorem]{pierce82} there exists a division algebra $X'$ and a positive integer $c$ such that $X=\mathrm{M}_c(X')$. We call $c=c(X)$ the \emph{capacity} of $X$.

\begin{theorem}[Chia-Fu Yu]
\label{theorem:yu's-theorem}
    Let $X$ and $Y$ be two simple algebras with centers $Z_X$ and $Z_Y$, respectively. Assume $Z_X\supseteq Z_Y$. Then, there is an embedding of $Z_Y$-algebras of $X$ into $Y$ if and only if $\dim_{Z_Y}X$ divides the capacity of the simple algebra $Y\otimes_{Z_Y} X^{op}\simeq (Y\otimes_{Z_Y}Z_X)\otimes_{Z_X}X^{op}$.
\end{theorem}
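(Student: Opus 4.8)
The plan is to translate the existence of an embedding into the existence of a module structure, and then to reduce that to a dimension count over the centers. First I would record that $B := Y\otimes_{Z_Y}X^{op}$ is a central simple $Z_X$-algebra: since $Y$ is central simple over $Z_Y$ and $X^{op}$ is a simple ring with center $Z_X\supseteq Z_Y$, the tensor product is simple with center $Z_X$, and the isomorphism $B\simeq(Y\otimes_{Z_Y}Z_X)\otimes_{Z_X}X^{op}$ exhibits it as the tensor over $Z_X$ of two central simple $Z_X$-algebras. Write $B\simeq\mathrm{M}_c(D)$ with $D$ a division algebra of center $Z_X$ and $c=c(B)$ the capacity, and let $S$ be the simple left $B$-module. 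Then $\dim_{Z_X}B=c\cdot\dim_{Z_X}S$, hence $\dim_{Z_Y}S=\dim_{Z_Y}B/c=\dim_{Z_Y}Y\cdot\dim_{Z_Y}X/c$.

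The key point is that a $Z_Y$-algebra embedding $\iota\colon X\to Y$ is the same datum as a left $B$-module structure on the $Z_Y$-vector space underlying $Y$ that restricts, along $Y=Y\otimes 1\subseteq B$, to the left regular representation of $Y$. From $\iota$ one lets $Y\otimes 1$ act by left multiplication and $1\otimes X^{op}$ act by $v\mapsto v\,\iota(x)$; these commute and agree on $Z_Y$, giving the $B$-module. Conversely, such a module structure gives a ring map $B\to\End_{Z_Y}(Y)$ sending $1\otimes X^{op}$ into the centralizer of the left multiplications, namely the ring of right multiplications, which is isomorphic to $Y^{op}$; this produces a ring homomorphism $X\to Y$, injective because $B$ is simple and $Y\neq 0$, and $Z_Y$-linear because $q\otimes 1=1\otimes q$ in $B$ forces $\iota(q)=q$ for $q\in Z_Y$.

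It then remains to decide when the $Z_Y$-space underlying $Y$ admits a left $B$-module structure inducing the regular representation of $Y$. Since $Y$ is a simple ring, two finitely generated $Y$-modules of equal finite $Z_Y$-dimension are isomorphic, so a left $B$-module $M$ restricts to the regular $Y$-module exactly when $\dim_{Z_Y}M=\dim_{Z_Y}Y$. As every $B$-module is a direct sum of copies of $S$, such an $M$ exists if and only if $\dim_{Z_Y}S$ divides $\dim_{Z_Y}Y$, which by the formula above is equivalent to $\dim_{Z_Y}X$ dividing $c=c(B)$. For sufficiency one takes $m=c/\dim_{Z_Y}X$, sets $M=S^{\oplus m}$, chooses a $Y$-module isomorphism between $M$ and the regular $Y$-module, and transports the $B$-action across it; by the previous paragraph this yields the embedding. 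For necessity, an embedding makes $Y$ such a $B$-module, forcing the divisibility.

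I expect the main obstacle to be bookkeeping rather than a single hard step: pinning down precisely that ``embedding of $Z_Y$-algebras'' corresponds bijectively to ``$B$-module structure extending the regular representation'' (including the $Z_Y$-linearity and the passage through $X^{op}$), and verifying the identity $\dim_{Z_X}B=c(B)\cdot\dim_{Z_X}S$ on which the whole count rests. One should also be careful to check that $B$ is simple in the first place, which is exactly where the hypothesis that $Y$ is central simple over its center $Z_Y$ is used.
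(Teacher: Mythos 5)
Your argument is correct, and it is worth noting that the paper itself does not prove this statement at all: it is quoted as a particular case of Yu's results (\cite[Theorem~1.2]{yu12}, \cite[Proposition~2.2]{yu13}), so the only thing to compare against is Yu's own proof, which your proposal essentially reconstructs. The chain of reductions you give is the standard one and each link holds: $B=Y\otimes_{Z_Y}X^{op}$ is simple with center $Z_X$ because $Y$ is central simple over $Z_Y$ and $X^{op}$ is simple; a $Z_Y$-algebra embedding $\iota\colon X\to Y$ is equivalent to a $B$-module structure on $Y$ extending the left regular representation (left multiplications commute with the right multiplications $v\mapsto v\iota(x)$, and conversely the centralizer of the left regular representation in $\End_{Z_Y}(Y)$ is $Y^{op}$ acting by right multiplication, with $q\otimes 1=1\otimes q$ forcing $\iota(q)=q$ for $q\in Z_Y$); and since $B$ is semisimple with unique simple module $S$ satisfying $\dim_{Z_Y}S=\dim_{Z_Y}Y\cdot\dim_{Z_Y}X/c(B)$, a module of $Z_Y$-dimension $\dim_{Z_Y}Y$ exists exactly when $\dim_{Z_Y}X\mid c(B)$, while simplicity of $Y$ guarantees that any such module restricts to a copy of the regular representation, so the action can be transported onto $Y$ itself. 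The dimension bookkeeping ($m\cdot\dim_{Z_Y}S=\dim_{Z_Y}Y$ iff $m\cdot\dim_{Z_Y}X=c(B)$) checks out, and both directions of the equivalence are covered. What your write-up buys over the paper's treatment is self-containedness: the theorem is reduced to Wedderburn theory and the description of $\End_Y(Y)$, with no external input, at the cost of a page of module-theoretic verification that the paper outsources to the literature.
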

\begin{proof}
    This is a particular case of \cite[Theorem~1.2]{yu12}, and is also proved in \cite[Proposition~2.2]{yu13}.
\end{proof}

For a simple algebra $X$ with center $Z_X$, we denote by $\ord_{Z_X}[X]$ the order of $[X]$ in the Brauer group $\Br(Z_X)$. This order is also called the \emph{Schur index} of $X$. The dimension, Schur index and capacity of $X$ are related by the equality
\[
	\dim_{Z_X}X = (c(X)\cdot \ord_{Z_X}[X])^2.
\]
Note that $\dim_{Z_X} X=\ord_{Z_X}[X]^2$ exactly when $X$ is a division algebra.

\subsection{Finding a prime subalgebra}
\label{section:prime-subalgebra}

We apply Theorem~\ref{theorem:yu's-theorem} to find algebras of prime Schur index in division alebras.
 
\begin{lemma}\label{lemma:subalgebra-prime-Schur-index}
    Let $E$ be a division algebra with center $Z$ with Schur index $m$. Let $\ell$ be a prime factor of $m$. Then $E$ contains a field $F$ with $F\supseteq Z$ and a simple algebra $D$ with center $F$ and Schur index $\ell$ such that
    \(
    [E\otimes_{Z} F] = [D]
    \)
    in $\Br(F)$.
\end{lemma}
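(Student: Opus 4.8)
The plan is to construct $F$ by \emph{removing a single factor of $\ell$} from $E$ at the level of Brauer classes, to realize $F$ inside $E$ by means of Theorem~\ref{theorem:yu's-theorem}, and then to take $D$ to be the centralizer of $F$ in $E$. Throughout, since all the algebras in play are finite-dimensional over $\QQ$, the fields $Z$ and $F$ are number fields; in particular index and exponent coincide for central simple algebras over them, a fact I use repeatedly.

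First I would build $F$. Consider the class $\ell\cdot[E]\in\Br(Z)$. Since $E$ is a division algebra of Schur index $m$, the class $[E]$ has order $m$, so $\ell\cdot[E]$ has order $m/\ell$; let $D'$ be the division algebra underlying it, so that $\dim_Z D'=(m/\ell)^2$. Take $F$ to be a maximal subfield of $D'$, so that $[F:Z]=m/\ell$ and $F$ splits $D'$. Applying restriction $\Br(Z)\to\Br(F)$ to $[D']=\ell\cdot[E]$ gives $\ell\cdot[E\otimes_Z F]=[D'\otimes_Z F]=0$ in $\Br(F)$, so $\ord_F[E\otimes_Z F]$ divides $\ell$, and hence $\on{ind}(E\otimes_Z F)$ divides $\ell$. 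Conversely, a maximal subfield $M$ of the division algebra underlying $E\otimes_Z F$ is a splitting field of $E\otimes_Z F$, hence of $E$, so its degree $[F:Z]\cdot\on{ind}(E\otimes_Z F)$ over $Z$ is divisible by $\on{ind}(E)=m$; as $[F:Z]=m/\ell$ this forces $\ell\mid\on{ind}(E\otimes_Z F)$. Therefore $\on{ind}(E\otimes_Z F)=\ell$, and in particular $[E\otimes_Z F]\neq 0$.

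Next I would embed $F$ into $E$ and read off $D$. The algebra $E\otimes_Z F$ is central simple over $F$ with $\dim_F(E\otimes_Z F)=\dim_Z E=m^2$ and $\ord_F[E\otimes_Z F]=\ell$, so its capacity equals $m/\ell$. Since $\dim_Z F=m/\ell$ divides $m/\ell$ and $F^{op}=F$, Theorem~\ref{theorem:yu's-theorem} (with $X=F$, $Y=E$ and $Z_X=F\supseteq Z_Y=Z$) yields an embedding of $Z$-algebras $F\hookrightarrow E$; identify $F$ with its image. Let $D=C_E(F)$ be the centralizer of $F$ in $E$. By the double centralizer theorem $D$ is simple, $C_E(D)=F$, and $\dim_Z F\cdot\dim_Z D=\dim_Z E$, so $\dim_Z D=m\ell$; its center is $F$ (as $F\subseteq C_E(F)$ and $C_E(C_E(F))=F$), so $D$ is central simple over $F$ of $F$-dimension $\ell^2$. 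The double centralizer theorem also provides an isomorphism $E\otimes_Z F\simeq\mathrm{M}_{[F:Z]}(C_E(F))$, so $[D]=[E\otimes_Z F]$ in $\Br(F)$; combined with $\on{ind}(E\otimes_Z F)=\ell$ this shows $D$ has Schur index $\ell$. Hence $F$ and $D$ satisfy all the requirements.

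The step I expect to need the most care is pinning $\on{ind}(E\otimes_Z F)$ down to exactly $\ell$: the upper bound comes from $F$ splitting the algebra of class $\ell\cdot[E]$, the lower bound from the degree of a splitting field of $E$ together with $[F:Z]=m/\ell$, and both use that index equals exponent over the number fields involved. One should also be attentive to the direction of the isomorphism $E\otimes_Z F\simeq\mathrm{M}_{[F:Z]}(C_E(F))$ so that $[E\otimes_Z F]$ is identified with $[D]$ rather than with its inverse; this is precisely what makes $D$ an honest subalgebra of $E$ and not merely of $E^{op}$.
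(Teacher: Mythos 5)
Your proof is correct, but it takes a genuinely different route from the paper's. The paper uses the fact that over a number field every division algebra is cyclic: it picks a maximal subfield $M\subset E$ with $M/Z$ cyclic of degree $m$, takes $F$ to be the unique intermediate field with $[M:F]=\ell$, defines $D$ as the division algebra in the class $[E\otimes_Z F]$, and then applies Theorem~\ref{theorem:yu's-theorem} directly to embed $D$ into $E$. You instead construct $F$ as a maximal subfield of the division algebra of class $\ell[E]$, pin down $\ord_F[E\otimes_Z F]=\ell$ via exponent$=$index and degree bounds on splitting fields, use Yu's theorem only to embed the \emph{field} $F$ into $E$, and then recover $D$ as the centralizer $C_E(F)$ via the double centralizer theorem (which simultaneously delivers $[D]=[E\otimes_Z F]$). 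Both arguments are sound. Yours avoids cyclic algebras entirely, at the cost of an extra index computation and the centralizer machinery; the paper's gives $F$ (and the bound $\ord_F[E\otimes_Z F]\mid\ell$, via $[M:F]=\ell$) for free from the cyclic structure but must feed the full algebra $D$ into Yu's theorem. A minor stylistic point: once you have $F\hookrightarrow E$, the centralizer step replaces a second invocation of Yu's theorem, which is a nice economy; conversely, the paper never needs to argue that $F$ itself sits inside $E$, since it is manufactured as a subfield of $M\subset E$ from the outset.
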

\begin{proof}
    Because $Z$ is a number field, $E$ is a cyclic algebra \cite[18.8~Theorem]{pierce82}. By definition, this means that there is a maximal field $M\subset E$ such that $M/Z$ is a cyclic Galois extension. The degree of this extension is $[M:Z]=\ord_{Z}[E]=m$. Hence there is a subfield $F\subset M$ containing $Z$ and such that $[M:F]=\ell$. Now let $D$ be the unique division algebra in the Brauer class of $[E\otimes_{Z} F]$. 

    We need to check that $D$ is indeed a subalgebra of $E$. Note that
    \[
    [(E\otimes_{Z} F)\otimes_{F} D^{op}] = [E\otimes_{Z} F] - [D] = [F]
    \]
    in $\Br(F)$, so by counting dimensions we get that 
    $(E\otimes_{Z} F)\otimes_{F} D^{op} \simeq \mathrm{M}_{\ell^2[F:Z]}(F)$. In other words, the capacity of $(E\otimes_{Z} F)\otimes_{F} D^{op}$ is exactly $\ell^2[F:Z]$, and we have $\dim_{Z} D = \ell^2[F:Z]$. Theorem~\ref{theorem:yu's-theorem} with $X=D$ and $Y=E$ implies that $D$ is a subalgebra of $E$.
\end{proof}

\subsection{Embeddings of division algebras}\label{section:theorem-algebras}

In this section we prove our technical result on division algebras using Theorem~\ref{theorem:yu's-theorem}. To do this, it is easier to first perform an extension of scalars.

\begin{lemma}\label{lemma:extension-of-scalars}
	Let $D$ and $B$ division algebras with centers $F$ and $K$, respectively. Suppose we are given an embedding $\iota \colon  D\hookrightarrow B$. Then, the compositum $\tilde F=\iota(F)K$ in $B$ is a field, and $\iota$ extends to an embedding of $K$-algebras
    \[
    \tilde\iota\colon  D\otimes_F \tilde F\hookrightarrow B.
    \]
\end{lemma}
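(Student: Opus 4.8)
The plan is to take for $\tilde\iota$ the multiplication map $D\otimes_F\tilde F\to B$ sending $d\otimes x$ to $\iota(d)x$, once one knows that $\tilde F$ is a field and that this map is a well-defined ring homomorphism.

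First I would check that $\tilde F=\iota(F)K$ is a field. Since $F$ is the center of the division algebra $D$ and $\iota$ is injective, $\iota(F)$ is a subfield of $B$; and since $K$ is the center of $B$, each element of $K$ commutes with each element of $\iota(F)$. Consequently the set of finite sums $\sum_i\iota(f_i)k_i$ (with $f_i\in F$, $k_i\in K$) is a commutative $\QQ$-subalgebra of $B$, and it is precisely the subalgebra generated by $\iota(F)$ and $K$, namely $\tilde F$. As a subring of the division algebra $B$ it has no zero divisors, and it is finite-dimensional over $\QQ$; a finite-dimensional commutative $\QQ$-algebra without zero divisors is a field. Moreover $\tilde F$ contains the copy $\iota(F)\cong F$ of $F$ as well as $K$, so it is simultaneously a field extension of $F$ and a $K$-algebra.

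Next I would define $\phi\colon D\otimes_F\tilde F\to B$ by $d\otimes x\mapsto\iota(d)x$. This is well defined because $F$ acts on $\tilde F$ through $\iota$ and $\iota$ is a ring homomorphism, so the map $D\times\tilde F\to B$, $(d,x)\mapsto\iota(d)x$, is $F$-balanced and biadditive. It sends $1\otimes 1$ to $1$, and it is $K$-linear since $K\subseteq\tilde F$ acts on the second tensor factor and $K$ is central in $B$. The point that requires the hypotheses is multiplicativity: since $\iota(F)$ centralizes $\iota(D)$ (because $F$ is central in $D$) and $K$ centralizes all of $B$ (because $K$ is central in $B$), every $x\in\tilde F$ commutes with every $\iota(d)$, and hence
\[
\phi\bigl((d_1\otimes x_1)(d_2\otimes x_2)\bigr)=\iota(d_1 d_2)\,x_1 x_2=\iota(d_1)\,x_1\,\iota(d_2)\,x_2=\phi(d_1\otimes x_1)\,\phi(d_2\otimes x_2).
\]
Finally, $D$ is central simple over $F$ and $\tilde F/F$ is a field extension, so $D\otimes_F\tilde F$ is central simple over $\tilde F$, in particular simple; therefore the nonzero ring homomorphism $\phi$ has trivial kernel and is an embedding of $K$-algebras. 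Since $\phi(d\otimes 1)=\iota(d)$, it extends $\iota$, and we set $\tilde\iota=\phi$.

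The argument is essentially formal, and I do not expect a real obstacle. The only delicate point is the multiplicativity of $\phi$: it is there, and only there, that one uses both that $F$ lies in the center of $D$ and that $K$ lies in the center of $B$. The invocation of the simplicity of $D\otimes_F\tilde F$—which rests on $\tilde F$ being a field, established in the first step—is what upgrades the a priori merely nonzero homomorphism $\phi$ into an injection.
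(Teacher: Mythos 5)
Your proposal is correct and follows essentially the same route as the paper: $\tilde F$ is a field because it is a commutative subalgebra of the division algebra $B$ (commutativity coming from $K$ being central in $B$), the map is the multiplication map $d\otimes x\mapsto\iota(d)x$, multiplicativity uses exactly that $F$ is central in $D$ and $K$ is central in $B$, and injectivity follows from the simplicity of $D\otimes_F\tilde F$. Your observation that the centralizer argument extends from the generators $\iota(F)$ and $K$ to all of $\tilde F$ is a slightly cleaner way of organizing the same computation the paper does on elements of the form $\iota(f)\kappa$.
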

\begin{proof}
    Since $B$ is a division algebra, and $K$ is its center, the subalgebra $\tilde F$ generated by $\iota(F)$ and $K$ must be a field. 

    Let $i\colon D\times \tilde F\to B$ denote multiplication inside $B$, $(x,y)\mapsto \iota(x)y$. The map $i$ is $F$-bilinear, so it factors through a unique $F$-linear map $\tilde\iota\colon D\otimes_F \tilde F\to B$. In fact, $\tilde\iota$ is also $K$-linear, as seen directly from the definition of $i$.

    The property $\tilde\iota|_D = \iota$ holds by definition. We need to show that $\tilde\iota$ is an injective morphism of algebras. Since it sends $1\mapsto 1$, we only need to check it is multiplicative. Let $\alpha,\alpha'\in D$, $f,f'\in F$, and $\kappa,\kappa'\in K$. Then
    \begin{align*}
    \tilde\iota(\alpha\alpha'\otimes ff'\kappa\kappa')
    = \iota(\alpha\alpha'ff')\kappa\kappa'
    &= \iota(\alpha)\iota(f)\kappa \iota(\alpha')\iota(f')\kappa'\\
    &= \tilde\iota(\alpha\otimes\iota(f)\kappa)\cdot
    \tilde\iota(\alpha'\otimes\iota(f')\kappa').
    \end{align*}
    This holds because $F$ is the center of $D$, and $K$ commutes with $\iota(D)$.
    Finally, $\tilde\iota$ is injective because $D\otimes_F \tilde F$ is a simple algebra.
\end{proof}

Next, we move on to computing the necessary capacity. For this, we need to look at orders of Brauer classes. 

\begin{proposition}[Section 13.4 of \cite{pierce82}]\label{proposition:schur-index}
    Let $L/K$ be a finite extension of fields, and let $B$ be a central simple $K$-algebra.
    \begin{enumerate}
        \item If $L$ splits $B$, then $\ord_{K}[B]$ divides $[L:K]$. 
        \item $B$ contains a subfield $L$ that splits $B$, and $\ord_{K}[B]=[L:K]$.
        \item $\ord_{K}[B]$ divides $[L:K]\cdot\ord_{L}[B\otimes_K L]$.
    \end{enumerate}
\end{proposition}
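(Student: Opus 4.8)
The three statements are standard facts about Schur indices, so I would simply assemble them from the basic theory of central simple algebras rather than reprove everything from scratch. For part (1): if $L$ splits $B$, then $B \otimes_K L \simeq \mathrm{M}_n(L)$, which means $[B]$ lies in the kernel of the restriction map $\Br(K) \to \Br(L)$. The order of this restriction map on any class is killed by $[L:K]$ — one way to see this is via the corestriction-restriction composition $\on{cor} \circ \on{res} = [L:K]$ on $\Br(K)$, so $[L:K]\cdot[B] = \on{cor}(\on{res}[B]) = \on{cor}(0) = 0$. Hence $\ord_K[B]$ divides $[L:K]$. Alternatively, and more in the spirit of \cite{pierce82}, one shows directly that a splitting field of degree $[L:K]$ embeds as a subfield of $\mathrm{M}_t(B)$ for suitable $t$ and uses the double centralizer theorem.

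For part (2), I would invoke the fact that $B$, being central simple over a number field (indeed over any field), has a maximal subfield $L$ with $[L:K] = \deg B$ — but that gives $[L:K]$ equal to the degree, not the index. To get the index exactly, I would instead use Wedderburn to write $B \simeq \mathrm{M}_c(B')$ with $B'$ the underlying division algebra, note $[B]=[B']$ and $\ord_K[B] = \deg B'$, and then take $L$ to be a maximal subfield of $B'$, which has degree $\deg B' = \ord_K[B]$ over $K$ and splits $B'$ hence $B$. That $L$ sits inside $B$ itself (not just $\mathrm{M}_c(B')$) is clear since $B' \hookrightarrow \mathrm{M}_c(B') = B$ via the diagonal. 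This is essentially \cite[13.3]{pierce82}.

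For part (3), I would argue as follows: let $L'$ be a subfield of $B \otimes_K L$, containing $L$, that splits $B\otimes_K L$ with $[L':L] = \ord_L[B\otimes_K L]$, obtained by applying part (2) to the central simple $L$-algebra $B \otimes_K L$. Then $L'$ also splits $B$ over $K$, because $B \otimes_K L' \simeq (B \otimes_K L)\otimes_L L'$ is split. Now $[L':K] = [L':L]\cdot[L:K] = [L:K]\cdot \ord_L[B\otimes_K L]$, and applying part (1) to the splitting field $L'/K$ gives that $\ord_K[B]$ divides $[L':K] = [L:K]\cdot\ord_L[B\otimes_K L]$, which is exactly the claim.

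I do not anticipate a genuine obstacle here, since each piece is classical; the only thing to be careful about is the distinction between the degree and the index of $B$ in part (2) — conflating them would break the argument — and making sure in part (3) that a splitting subfield of $B\otimes_K L$ over $L$ is genuinely realized as an extension of $L$ (which is automatic from the statement of part (2) applied over the base field $L$). Since the paper explicitly attributes this to Section 13.4 of \cite{pierce82}, the cleanest write-up is really just to cite that reference and sketch the reduction of (3) to (1) and (2) as above.
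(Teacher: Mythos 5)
Your proposal is correct and takes essentially the same approach as the paper, which simply cites Section 13.4 of \cite{pierce82} without proof; your sketches of (1)--(3) are the standard arguments (corestriction--restriction, maximal subfield of the underlying division algebra, and reduction of (3) to (1) and (2) via a splitting field of $B\otimes_K L$). The one point worth keeping in mind is that in (2) you identify $\ord_K[B]$, the order of $[B]$ in $\Br(K)$, with the degree of the underlying division algebra (i.e.\ exponent $=$ index), which is legitimate here because all fields in the paper are number fields.
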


\begin{lemma}\label{lemma:division-algebra-subfield-ord}
	Let $K$ be a field, let $B$ be a central division $K$-algebra, and consider a field $L$ with $K\subseteq L\subset B$. Then
	\[
		\ord_{L}[B\otimes_K L] = \frac{\ord_{K}[B]}{[L:K]}.
	\]
\end{lemma}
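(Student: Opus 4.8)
The plan is to identify the division algebra underlying $B\otimes_K L$ with the centralizer $C:=C_B(L)$ of $L$ inside $B$, using the double centralizer theorem. The key inputs are the standard facts (see \cite[Ch.~13]{pierce82}): for a subfield $L$ of a central simple $K$-algebra $A$ with $K\subseteq L\subseteq A$, the centralizer $C_A(L)$ is a central simple $L$-algebra, one has the dimension formula $\dim_K A=[L:K]\cdot\dim_K C_A(L)$, and $[C_A(L)]=[A\otimes_K L]$ in $\Br(L)$. Applying this with $A=B$, we get that $C=C_B(L)$ is central simple over $L$ with $[C]=[B\otimes_K L]$, hence $\ord_L[B\otimes_K L]=\ord_L[C]$.

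Next I would observe that $C$ is a subalgebra of the division algebra $B$, so it has no zero divisors; being finite-dimensional over $L$ it is therefore a division algebra, and for a central division algebra the Schur index equals the degree, i.e. $\ord_L[C]=\sqrt{\dim_L C}$. Now the dimension formula, combined with $\dim_K B=\ord_K[B]^2$ (again because $B$ is a division algebra), gives
\[
\dim_L C=\frac{\dim_K C}{[L:K]}=\frac{\dim_K B}{[L:K]^2}=\left(\frac{\ord_K[B]}{[L:K]}\right)^{2},
\]
and taking square roots yields $\ord_L[B\otimes_K L]=\ord_L[C]=\ord_K[B]/[L:K]$, as claimed. (In particular this recovers that $[L:K]$ divides $\ord_K[B]$.)

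I do not expect a serious obstacle here: the only care needed is to quote the double centralizer theorem in precisely the form above — both the dimension count and the Brauer-equivalence $C_A(L)\sim A\otimes_K L$ — and to note the elementary point that a finite-dimensional subalgebra of a division algebra is itself a division algebra. As a consistency check, and the route one could take to avoid the centralizer, Proposition~\ref{proposition:schur-index}(3) already gives one divisibility $\ord_K[B]\mid [L:K]\cdot\ord_L[B\otimes_K L]$; the reverse divisibility is what the centralizer argument supplies, and it is equivalent, via Theorem~\ref{theorem:yu's-theorem} (note that $B\otimes_K D^{op}$ is split with capacity $\ord_K[B]\cdot\ord_L[B\otimes_K L]$, where $D$ is the division algebra of class $[B\otimes_K L]$), to the embeddability of $D$ into $B$ — which is exactly the statement $D\cong C_B(L)$.
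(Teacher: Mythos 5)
Your argument is correct, but it takes a different route from the paper. You identify the underlying division algebra of $B\otimes_K L$ with the centralizer $C_B(L)$ via the double centralizer theorem (centralizer is central simple over $L$, $\dim_K B=[L:K]\cdot\dim_K C_B(L)$, and $[C_B(L)]=[B\otimes_K L]$ in $\Br(L)$, as in Pierce Ch.~13), note that a subalgebra of a division algebra is a division algebra, and read off the Schur index as the degree $\sqrt{\dim_L C_B(L)}=\ord_K[B]/[L:K]$. The paper instead proves two divisibilities: $\ord_K[B]\mid [L:K]\cdot\ord_L[B\otimes_K L]$ from Proposition~\ref{proposition:schur-index}(3), and the reverse by choosing a maximal subfield $M\subset B$ containing $L$, which splits $B$ and hence $B\otimes_K L$, so that $\ord_L[B\otimes_K L]\mid [M:L]=\ord_K[B]/[L:K]$. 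Your approach is a one-shot computation that moreover identifies the division algebra in the class $[B\otimes_K L]$ explicitly (and makes the divisibility $[L:K]\mid\ord_K[B]$ transparent), at the cost of invoking the centralizer machinery; the paper's proof stays within the splitting-field facts it has already quoted in Proposition~\ref{proposition:schur-index}, which keeps the prerequisites minimal. One small caveat: your closing parenthetical equating ``$D$ embeds into $B$'' with ``$D\simeq C_B(L)$'' is not literally immediate --- one needs Skolem--Noether to move the image of the center onto $L$, after which the image lands in $C_B(L)$ and a dimension count (which is essentially the lemma itself) gives equality --- but since this is only a consistency check and not part of your main argument, it does not affect the proof.
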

\begin{proof}
	By Proposition~\ref{proposition:schur-index}(3) we have
	\[
		\ord_{K}[B] \mid 
		[L:K]\cdot 
		\ord_{L}[B\otimes_K L].
	\]
	To see the \emph{reverse divisibility}, we let $M\subset B$ be a maximal field containing $L$. Then $M$ splits $B$, and in particular it splits $B\otimes_K L$. Therefore by Proposition~\ref{proposition:schur-index}(1) we obtain
	\[
		\ord_{L}[B\otimes_K L] \mid [M:L]
	\]
	and we are done since 
	$[M:L]=\frac{[M:K]}{[L:K]}=
	\frac{\ord_{K}[B]}{[L:K]}.$
\end{proof}

\begin{proposition}\label{proposition:tensor-capacity-KcF}
Let $\ell$ be a prime. Let $\tilde D$ be a division algebra with center $\tilde F$ and Schur index $\ell$, and let $B$ be a division algebra with center $K$. Suppose $K\subseteq \tilde F\subset B$.
    \begin{enumerate}
        \item If $\ell \nmid \frac{\ord_{K}[B]}{[\tilde F:K]}$, then
        \[
            c\left( 
            (B\otimes_K \tilde F) \otimes_{\tilde F} \tilde D^{op}
            \right) 
            =
            [\tilde F:K]
        \]
        \item If $\frac{\ord_{K}[B]}{[\tilde F:K]} = \ell t$ with $\ell\nmid t$, then
        \[
            c\left( 
            (B\otimes_K \tilde F) \otimes_{\tilde F} \tilde D^{op}
            \right) 
            =
            \begin{cases}
                \ell^2[\tilde F:K],\text{ if }
                t[\tilde D] = t[B\otimes_K \tilde F]\text{ in }\Br(\tilde F),\\
                \ell[\tilde F:K],\text{ otherwise.}
            \end{cases}
        \]
        \item If $\ell^2\mid \frac{\ord_{K}[B]}{[\tilde F:K]}$, then
        \[
            c\left( 
            (B\otimes_K \tilde F) \otimes_{\tilde F} \tilde D^{op}
            \right) 
            = \ell[\tilde F:K].
        \]
    \end{enumerate}
\end{proposition}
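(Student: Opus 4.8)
The plan is to read off the capacity of the central simple $\tilde F$-algebra $A := (B\otimes_K\tilde F)\otimes_{\tilde F}\tilde D^{op}$ from its dimension and the order of its Brauer class, via the identity $\dim_{\tilde F}A = (c(A)\cdot\ord_{\tilde F}[A])^2$ recorded after Theorem~\ref{theorem:yu's-theorem}. So the whole proposition is really a computation of $\ord_{\tilde F}[A]$.

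First I would compute the two ingredients. Since $B$ is a central division algebra over $K$ we have $\dim_K B = \ord_K[B]^2$, hence $\dim_{\tilde F}(B\otimes_K\tilde F) = \ord_K[B]^2$ and, as $B\otimes_K\tilde F$ and $\tilde D^{op}$ are central simple over the common field $\tilde F$ (this is where $K\subseteq\tilde F\subset B$ is used), $\dim_{\tilde F}A = \ell^2\,\ord_K[B]^2$, so $\sqrt{\dim_{\tilde F}A} = \ell\,\ord_K[B]$. On the Brauer side, $[A] = [B\otimes_K\tilde F] - [\tilde D]$ in $\Br(\tilde F)$ (using $[\tilde D^{op}] = -[\tilde D]$); by Lemma~\ref{lemma:division-algebra-subfield-ord} the class $[B\otimes_K\tilde F]$ has order $\ord_K[B]/[\tilde F:K]$, and $[\tilde D]$ has order $\ell$. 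Writing $n = \ord_K[B]$ and $d = [\tilde F:K]$ (so $d\mid n$), it remains to find $\ord_{\tilde F}\bigl([B\otimes_K\tilde F]-[\tilde D]\bigr)$ and then set $c(A) = \ell n/\ord_{\tilde F}[A]$.

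This last step is elementary abelian group theory in the torsion group $\Br(\tilde F)$, carried out through the $\ell$-primary decomposition of the two classes. If $\ell\nmid n/d$, the orders $n/d$ and $\ell$ are coprime, so the difference has order $(n/d)\ell$ and $c(A) = d$; this is case (1). If $\ell^2\mid n/d$, the $\ell$-part of $[B\otimes_K\tilde F]$ has order $\ell^a$ with $a\geq 2$, and adding the order-$\ell$ class $-[\tilde D]$ leaves it of order $\ell^a$, so $\ord_{\tilde F}[A] = n/d$ and $c(A) = \ell d$; this is case (3). In the middle case $n/d = \ell t$ with $\ell\nmid t$, both classes have $\ell$-part of order exactly $\ell$; setting $x=[B\otimes_K\tilde F]$, $y=[\tilde D]$, one has $t(x-y) = tx-ty$, and multiplication by the prime-to-$\ell$ integer $t$ kills the prime-to-$\ell$ part of $x$ while sending its $\ell$-part (and that of $y$) to a generator of the same order-$\ell$ subgroup. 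Hence $tx = ty$ exactly when the $\ell$-parts of $x$ and $y$ coincide, and in that subcase $x-y$ equals the prime-to-$\ell$ part of $x$, of order $t$, giving $\ord_{\tilde F}[A]=t$ and $c(A)=\ell^2 d$; otherwise the $\ell$-parts do not cancel, $\ord_{\tilde F}[A]=\ell t$, and $c(A)=\ell d$. This matches case (2) with its two subcases.

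I expect the main obstacle to be the bookkeeping in case (2): one must verify that $x-y$ has order \emph{exactly} $t$ in the cancelling subcase (not a proper divisor) and \emph{exactly} $\ell t$ otherwise. Both follow once one notes that for $g$ of order $\ell t$ with $\ell\nmid t$, the element $tg$ generates the order-$\ell$ part of $\langle g\rangle$ while $\ell g$ generates the order-$t$ part; writing $x$ and $y$ in these two components makes the three cases fall out mechanically. The only other things to be careful about are the standard facts used tacitly above: $\dim_K B = \ord_K[B]^2$ for a division algebra, multiplicativity of dimension and additivity of Brauer classes under $\otimes_{\tilde F}$, and $[\tilde D^{op}]=-[\tilde D]$.
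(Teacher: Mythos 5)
Your proposal is correct and follows essentially the same route as the paper: both compute $\dim_{\tilde F}((B\otimes_K\tilde F)\otimes_{\tilde F}\tilde D^{op}) = (\ell\,\ord_K[B])^2$, read off the capacity as $\ell\,\ord_K[B]/\ord_{\tilde F}[\,\cdot\,]$, and determine the order of $[B\otimes_K\tilde F]-[\tilde D]$ by a case analysis using Lemma~\ref{lemma:division-algebra-subfield-ord}. Your write-up merely makes explicit the $\ell$-primary decomposition argument that the paper leaves implicit in ``Now we reason by cases.''
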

\begin{proof}
	We let $C=(B\otimes_K \tilde F)\otimes_{\tilde F} \tilde D^{op}$, $c=c(C)$ the capacity of $C$, and $t_C=\ord_{\tilde F}[C]$ its Schur index. The dimension of $C$ is
	\[
		\dim_{\tilde F} C = \dim_{\tilde F} \tilde D\cdot \dim_{\tilde F}(B\otimes_K \tilde F) 
		= \dim_{\tilde F} \tilde D \cdot \dim_K B
		= (\ell \cdot \ord_{K}[B])^2,
	\]
	so by the equality \(\dim_{\tilde F} C = c^2t_C^2\) we obtain
	\begin{equation}\label{eq:capacity}
		c = \frac{\ell \cdot \ord_{K}[B]}{t_C}.
	\end{equation}
	Our task is to compute $t_C$ in the various cases. This is viable since, in $\Br(\tilde F)$, we have $[C]=[B\otimes_K \tilde F]-[\tilde D]$. By assumption, we have $\ord[\tilde D]=\ell$, and Lemma~\ref{lemma:division-algebra-subfield-ord} gives us
	\[
		\ord_{\tilde F}[B\otimes_K \tilde F] = 
		\frac{\ord_{K}[B]}{[\tilde F:K]}.
	\]
	Now we reason by cases.
	\begin{itemize}
		\item If $\frac{\ord_{K}[B]}{[\tilde F:K]}$ is coprime to $\ell$, then $t_C= \ell\frac{\ord_{K}[B]}{[\tilde F:K]}$.
		\item If $\frac{\ord_{K}[B]}{[\tilde F:K]}=\ell t$ with $t$ and $\ell$ coprime, then $t_C = \frac{\ord_{K}[B]}{\ell[\tilde F:K]}$ exactly when $t[B\otimes_K \tilde F]=t[\tilde D]$, and $t_C = \frac{\ord_{K}[B]}{[\tilde F:K]}$ otherwise. 
		\item Finally, if $\ell^2$ divides $\frac{\ord_{K}[B]}{[\tilde F:K]}$, then $t_C = \frac{\ord_{K}[B]}{[\tilde F:K]}$.
	\end{itemize}
	Plugging the computed $t_C$ into Equation \eqref{eq:capacity} yields the stated capacities.
\end{proof}

Finally, we arrive at our main result on division algebras.

\begin{theorem}%[Theorem~\ref{theorem:main-theorem-algebras}]
\label{theorem:main-theorem-algebras}
    Let $B$ be a central division algebra over a number field $K$. Let $\ell$ be a prime, $F$ a number field, $D$ a division algebra with center $F$ and Schur index $\ell$. Suppose that we are given an embedding of $\QQ$-algebras $\iota\colon  F\hookrightarrow B$. Then, the compositum $\tilde F=\iota(F)K$ in $B$ is a field, and we can extend $\iota$ to an embedding $\iota\colon  D\hookrightarrow B$ if and only if the following conditions hold:
    \begin{enumerate}
        \item $d:= \frac{\ord_{K}{[B]}}{[\tilde F:K]}$ is divisible by $\ell$ exactly once.
        \item $\frac{d}{\ell}[D\otimes_F \tilde F] = \frac{d}{\ell}[B\otimes_{K} \tilde F]$ in $\Br(\tilde F)$.
    \end{enumerate}
    Moreover, when these conditions hold, $\tilde F$ splits neither $D$ nor $B$.
\end{theorem}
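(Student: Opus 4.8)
The plan is to reduce the question of embedding $D$ into $B$ to a capacity computation via Yu's theorem, after the scalar extension of Lemma~\ref{lemma:extension-of-scalars}. First I would observe that $\tilde F = \iota(F)K$ is a field because it is a subring of the division algebra $B$ that is commutative (it is generated by the central subfield $K$ and the commutative subring $\iota(F)$; every finite-dimensional commutative subring of a division algebra is a field). Then, by Lemma~\ref{lemma:extension-of-scalars} applied to the embedding of fields $\iota\colon F\hookrightarrow B$, extending $\iota$ to an embedding $D\hookrightarrow B$ is equivalent to extending it to an embedding of $\tilde F$-algebras $D\otimes_F\tilde F\hookrightarrow B$ (any $\QQ$-algebra embedding of $D$ restricting to $\iota$ on $F$ will be $\tilde F$-linear once we also ask compatibility with $K\subseteq B$, and conversely Lemma~\ref{lemma:extension-of-scalars} produces such an extension from any embedding of $D$). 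So it suffices to characterize when $\tilde D := $ the unique division algebra Brauer-equivalent to $D\otimes_F\tilde F$ embeds into $B$ as a $\tilde F$-algebra --- but one must be slightly careful, since $D\otimes_F\tilde F$ need not itself be a division algebra; I would instead apply Yu's theorem directly to $X = D\otimes_F\tilde F$ and $Y = B$, or first note $D\otimes_F \tilde F \simeq \mathrm M_s(\tilde D)$ and use that $D\otimes_F\tilde F$ embeds in $B$ iff $\tilde D$ does, when $\tilde F$ has the right degree. The cleanest route: set $X=D$, $Y=B$ in Yu's theorem with $Z_X=F$, $Z_Y=K$, $Z_X\not\supseteq Z_Y$ in general, so actually I work with $X=D\otimes_F\tilde F$ (center $\tilde F\supseteq K$) and $Y=B$ (center $K$). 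Yu's theorem then says an embedding of $K$-algebras $D\otimes_F\tilde F\hookrightarrow B$ exists iff $\dim_K(D\otimes_F\tilde F)$ divides $c\big((B\otimes_K\tilde F)\otimes_{\tilde F}(D\otimes_F\tilde F)^{op}\big)$.

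Next I would compute both sides. On one hand $\dim_K(D\otimes_F\tilde F) = \dim_F D\cdot[\tilde F:K] = \ell^2[\tilde F:K]$. On the other hand, since $(D\otimes_F\tilde F)^{op}$ is Brauer-equivalent to $\tilde D^{op}$ over $\tilde F$, the algebra $(B\otimes_K\tilde F)\otimes_{\tilde F}(D\otimes_F\tilde F)^{op}$ is Brauer-equivalent --- hence has the same underlying division algebra, though possibly different capacity --- to $C := (B\otimes_K\tilde F)\otimes_{\tilde F}\tilde D^{op}$; I need to track the capacity carefully, relating $c$ of the two Morita-equivalent algebras by the ratio of their dimensions. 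It is cleanest to apply Proposition~\ref{proposition:tensor-capacity-KcF} to $\tilde D$ (which has center $\tilde F$ and Schur index $\ell$ by construction, provided $\ell\mid\ord_{\tilde F}[D\otimes_F\tilde F]$; if instead $\tilde F$ already splits $D$ this index drops and the embedding is automatic only when the divisibility condition fails --- I will need to handle the edge case where $[D\otimes_F\tilde F]$ is trivial separately, or absorb it). Writing $d = \ord_K[B]/[\tilde F:K] = \ord_{\tilde F}[B\otimes_K\tilde F]$ by Lemma~\ref{lemma:division-algebra-subfield-ord}, Proposition~\ref{proposition:tensor-capacity-KcF} gives $c(C)$ in the three cases $\ell\nmid d$, $d=\ell t$ with $\ell\nmid t$, and $\ell^2\mid d$; translating into $c$ of the original tensor algebra and comparing with $\ell^2[\tilde F:K]$, the divisibility $\dim_K(D\otimes_F\tilde F)\mid c(\cdots)$ holds exactly in the middle case and exactly when $t[\tilde D]=t[B\otimes_K\tilde F]$ in $\Br(\tilde F)$, i.e. exactly under conditions (1) and (2) (noting $t$ and $d/\ell$ coincide mod the relevant torsion, and $[\tilde D]=[D\otimes_F\tilde F]$ in $\Br(\tilde F)$).

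Finally, for the "moreover" clause I would argue as follows. If $\tilde F$ split $B$, then $\ord_{\tilde F}[B\otimes_K\tilde F]=1$, so $d=1$ and $\ell\nmid d$, contradicting condition (1); hence $\tilde F$ does not split $B$. If $\tilde F$ split $D$, then $[D\otimes_F\tilde F]=0$ in $\Br(\tilde F)$, so condition (2) would force $\frac{d}{\ell}[B\otimes_K\tilde F]=0$, i.e. $\ord_{\tilde F}[B\otimes_K\tilde F]=d$ divides $d/\ell$, which is absurd for $\ell$ a prime dividing $d$; hence $\tilde F$ does not split $D$ either. I expect the main obstacle to be the bookkeeping between $D\otimes_F\tilde F$ and its division part $\tilde D$ --- making sure the capacity divisibility in Yu's theorem is applied to the correct (possibly non-division) algebra and that Proposition~\ref{proposition:tensor-capacity-KcF}, stated for the division algebra $\tilde D$ of Schur index $\ell$, transfers correctly, including confirming that $\tilde D$ genuinely has Schur index $\ell$ (equivalently that $\ell\mid d$, which is exactly condition (1) in the "only if" direction and must be verified to even invoke the proposition in the "if" direction).
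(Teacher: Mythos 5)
Your proposal follows essentially the same route as the paper: reduce via Lemma~\ref{lemma:extension-of-scalars} to embedding $D\otimes_F\tilde F$ into $B$, apply Theorem~\ref{theorem:yu's-theorem} and read off the capacity from Proposition~\ref{proposition:tensor-capacity-KcF}, with the divisibility $\ell^2[\tilde F:K]\mid c$ holding exactly under conditions (1) and (2). Your handling of the edge case where $\tilde F$ splits $D$ (it cannot embed into a division algebra, and conditions (1)--(2) exclude it) and your Brauer-class argument for the ``moreover'' clause are only cosmetic variations on the paper's observations that $\tilde D$ must be a division algebra and that $\tilde F$ is not a maximal subfield of $B$.
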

\begin{proof}
	Let $\tilde F=\iota(F)K$ and $\tilde D=D\otimes_F \tilde F$.
	By Lemma~\ref{lemma:extension-of-scalars}, we have an embedding $\iota:D\hookrightarrow B$ if and only if we have an embedding $\tilde\iota:\tilde D\hookrightarrow B$. For $\tilde\iota$ to exist, it is necessary that $\tilde D$ be a division algebra, which in particular has Schur index $\ell$. The dimension $\dim_{K}\tilde D=\ell^2[\tilde F:K]$, and so by Theorem~\ref{theorem:yu's-theorem} there is an embedding of $K$-algebras $\tilde D\hookrightarrow B$ if and only if
	\[
		\ell^2[\tilde F:K] \ \Big|\ 
		c\left(
			(B \otimes_{K}\tilde F)
			\otimes_{\tilde F}
			\tilde D^{op}
		\right)=:c.
	\]
	We apply Proposition~\ref{proposition:tensor-capacity-KcF} to see that $\ell^2[\tilde F:K]$ divides $c$ if and only if $d=\frac{\ord_{K}[B]}{[\tilde F:K]}=\ell t$ with $\ell\nmid t$, and
	\[
        t[\tilde F] = t[B \otimes_{K} \tilde F]
    \] in $\Br(\tilde F)$.
	This proves the equivalence statement. When the conditions are satisfied, we have already noted that $\tilde F$ cannot split $D$. To see that $\tilde F$ does not split $B$, we observe that this is not a maximal field of $B$ (alternatively, we may also use the equality of Brauer classes).
\end{proof}

\section{Proof of the main theorem}
\label{section:proof-of-main-theorem}

As hinted in the introduction, our proof of Theorem~\ref{theorem:main} extends the 2-dimensional case. The main idea is found in \cite[Theorem~2.1.4]{schembri19}. For the reader's reference and completeness, we also give it here.

\begin{proposition}\label{proposition:dim-2}
    Let $A/k$ be a simple abelian surface with quaternionic multiplication. Let $\mathfrak p$ be a prime of $k$ over a rational prime $p$. If $A$ has good reduction at $\mathfrak p$ and $\End^0(A)$ does not ramify at $p$, then $A_{\mathfrak p}$ is not simple.
\end{proposition}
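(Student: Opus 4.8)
The plan is to use the characterization of endomorphism algebras of abelian varieties over finite fields (Honda–Tate) to locate a quaternion division algebra inside $\End^0(A_{\mathfrak p})$ and then show it cannot be all of $\End^0(A_{\mathfrak p})$ if $A_{\mathfrak p}$ is simple. First I would recall that, since $A$ is a surface with QM, $\End^0(A)$ is an indefinite quaternion division algebra $B$ over $\QQ$ that is unramified at $p$ by hypothesis. Reduction gives an injection $\End^0(A)\hookrightarrow\End^0(A_{\mathfrak p})$, so $B$ embeds into $\End^0(A_{\mathfrak p})$; in particular the latter is noncommutative, hence (for $A_{\mathfrak p}$ simple) it is a division algebra with center the field $\QQ(\pi)$ generated by the Frobenius $\pi$.

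Next I would argue by contradiction: suppose $A_{\mathfrak p}$ is simple. Then $E:=\End^0(A_{\mathfrak p})$ is a division algebra, $Z=\QQ(\pi)$ is a CM field or $\QQ$, and by Tate's formula $\dim_Z E = (2\dim A_{\mathfrak p}/[Z:\QQ])^2 = (4/[Z:\QQ])^2$, so $[Z:\QQ]\in\{1,2,4\}$. The subalgebra $B\subset E$ has center $\QQ$, which forces $\QQ\subseteq Z$ to sit inside the centralizer relations; since $B$ is $4$-dimensional over $\QQ$ and $E$ has $\QQ$-dimension $16/[Z:\QQ]$, the case $[Z:\QQ]=4$ is impossible (then $E$ is a field, contradicting noncommutativity) and $[Z:\QQ]=2$ gives $\dim_\QQ E=8$, with $B$ a $4$-dimensional $\QQ$-subalgebra. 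The key local computation is then at the primes of $Z$ above $p$: by Tate's theorem the invariants of $E$ at a place $v\mid p$ are $\tfrac{v(\pi)}{v(p)}[Z_v:\QQ_p]$, which are nonzero (the reduction is in the relevant isogeny class precisely because the abelian variety has positive-dimensional part), so $E$ is ramified at some place above $p$.

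I would then derive the contradiction from the fact that $B=\End^0(A)$ is unramified at $p$: since $B\hookrightarrow E$, we must have $B\otimes_\QQ Z \hookrightarrow E$ as $Z$-algebras by Lemma~\ref{lemma:extension-of-scalars}, and comparing Brauer classes, the ramification of $E$ at a place $v\mid p$ would have to be ``absorbed'' — but $B\otimes_\QQ Z_v$ is split (as $B$ is split at $p$), so $E\otimes_Z Z_v$ would need its invariant to be a multiple of $1/\ell$ coming purely from $\tilde D$, which is impossible in the quaternion ($\ell=2$) setting once one tracks that $E$ has index $2$ over $Z$. More directly, in dimension $2$ one can finish by a parity/invariant count: $E$ has index $2$, its local invariants at the places above $p$ are half-integers summing to $0$, hence there are an even number $\geq 2$ of ramified places above $p$; but each such place forces $Z_v$ to be a field not split by $B_v=\QQ_p\oplus\QQ_p\cdot(\text{split quaternion})$, whereas $B$ unramified at $p$ means $B\otimes Z$ is split at every $v\mid p$, contradicting $B\otimes Z\hookrightarrow E$ with $E$ ramified there.

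The main obstacle I expect is pinning down exactly why the embedding $B\hookrightarrow E$ together with $B$ split at $p$ is incompatible with $E$ ramified at a place above $p$ — i.e., correctly applying the embedding criterion of Theorem~\ref{theorem:yu's-theorem} (or a direct centralizer argument) to conclude that the centralizer of $B$ in $E$ would have to be a field splitting $E$ locally, contradicting local ramification. Handling the boundary case $[Z:\QQ]=1$, where $E$ is a quaternion algebra over $\QQ$ containing $B$, is then immediate: $E=B$ up to isomorphism would be unramified at $p$ and at $\infty$ (being the endomorphism algebra of an abelian surface over a finite field, $E$ must be ramified at $\infty$ or at $p$), giving the contradiction. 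I would present the argument uniformly by invoking Tate's local-invariant formula to get ramification above $p$ and then the splitting hypothesis on $B$ to rule it out.
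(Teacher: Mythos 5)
Your proposal follows the same broad strategy as the paper (locate $\End^0(A)$ inside $\End^0(A_{\mathfrak p})$, pin down $\QQ(\pi)$, then use Tate's local invariants), but it has a concrete gap: you assert that the Tate invariants at $v\mid p$ ``are nonzero'' and hence that $E=\End^0(A_{\mathfrak p})$ ramifies above $p$, but this is false when $\QQ(\pi)$ is real quadratic. In that case $E$ is the quaternion algebra over $\QQ(\pi)$ ramified exactly at the two real places, with \emph{trivial} invariant at every $v\mid p$, so your ``local computation'' gives nothing and your later ``parity/invariant count'' (half-integers above $p$ summing to $0$) does not go through either, since the sum-to-zero relation is over all places including the archimedean ones. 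You need to actively rule out $\QQ(\pi)$ real quadratic before you can claim ramification above $p$. The paper does this by first noting the dimension count forces $\End^0(A_{\mathfrak p})\simeq\End^0(A)\otimes_\QQ\QQ(\pi)$, then invoking Shimura (Prop.~15 of \cite{shimura63}) that $\End^0(A)$ is an \emph{indefinite} quaternion algebra over $\QQ$ (hence $\End^0(A)\otimes\QQ(\pi)$ is not totally definite) together with Tate's theorem that $\End^0(A_{\mathfrak p})$ is \emph{totally definite} whenever $\QQ(\pi)$ is real; this contradiction eliminates the real quadratic case, after which $\QQ(\pi)$ is imaginary quadratic, $E$ has no archimedean ramification, and being a nontrivial quaternion algebra it must ramify at some place over $p$, which transfers to $\End^0(A)$ via the isomorphism.

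A secondary remark: once you have $B\otimes_\QQ Z\hookrightarrow E$ with both sides of $Z$-dimension $4$, they are \emph{isomorphic}; there is no Brauer class to be ``absorbed'', and no need for Lemma~\ref{lemma:extension-of-scalars} or Theorem~\ref{theorem:yu's-theorem} here. The paper's proof of this proposition is deliberately elementary (a dimension count plus Tate's and Shimura's theorems) precisely because the $2$-dimensional case is the base case that motivates the heavier machinery used for the general theorem, not a consequence of it. Your treatment of the case $[Z:\QQ]=1$ is fine in spirit; the paper dispatches it more quickly by citing Tate's Theorem~2, which directly gives $\QQ(\pi)\supsetneq\QQ$.
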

\begin{proof}
    Let $\mathfrak p$ be a prime of $k$ of good reduction for $A$ and let $p$ be its residual characteristic. Assume $A_{\mathfrak p}$ is simple, then $\End^0(A_{\mathfrak p})$ is a division algebra. Our goal is to see that $\End^0(A)$ ramifies at $p$.
    
The reduction gives an embedding 
    \[
    \End^0(A)\hookrightarrow \End^0(A_{\mathfrak p}),
    \]
    making $\End^0(A_{\mathfrak p})$ noncommutative. The center of this algebra is the field $\QQ(\pi)$ generated by the Frobenius endomorphism. By \cite[Theorem~2]{tate1966}, $\QQ(\pi)$ strictly contains $\QQ$.
    By the table in \cite[pg. 202]{mumford74} it follows that $\End^0(A_{\mathfrak p})$ is a quaternion algebra over $\QQ(\pi)$, which in turn must be quadratic. Because the center $\QQ$ of $\End^0(A)$ is contained in $\QQ(\pi)$, the algebra $\End^0(A_{\mathfrak p})$ necessarily contains $\End^0(A)\otimes_\QQ \QQ(\pi)$. But now $\dim_{\QQ(\pi)} \End^0(A)\otimes_\QQ \QQ(\pi) = 4 = \dim_{\QQ(\pi)}\End^0(A_{\mathfrak p})$, so in fact we have an isomorphism
    \begin{equation}\label{eq:end-isomorphism}
        \End^0(A_{\mathfrak p}) \simeq \End^0(A)\otimes_\QQ \QQ(\pi).
    \end{equation}
    The field $\QQ(\pi)$ is either real or imaginary quadratic. We may discard the first possibility: by \cite[Proposition~15]{shimura63}, $\End^0(A)$ is an indefinite quaternion algebra, which must remain indefinite after tensoring with $\QQ(\pi)$. However, \cite[Theorem~2]{tate1966} implies $\End^0(A_{\mathfrak p})$ is totally definite whenever $\QQ(\pi)$ is real. Hence $\QQ(\pi)$ is an imaginary quadratic field.

    We end by applying \cite[Theorem~2]{tate1966} once again: when $\QQ(\pi)$ has no real places, $\End^0(A_{\mathfrak p})$ must ramify at some place over $p$. From \eqref{eq:end-isomorphism} it follows that $\End^0(A)$ ramifies at $p$.
\end{proof}

\begin{proof}[Proof of Theorem~\ref{theorem:main}]

Denote by $Z$ the center of $\End^0(A)$ and fix a prime divisor $\ell$ of the Schur index of $\End^0(A)$. By applying Lemma~\ref{lemma:subalgebra-prime-Schur-index} with $E = \End^0(A)$, there is a finite extension $F/Z$, a central division $F$-algebra $D\subseteq \End^0(A)$ with Schur index $\ell$, and an equality of classes
\begin{equation}\label{eq:D-EndA}
    [\End^0(A)\otimes_Z F] = [D]
\end{equation}
in $\Br(F)$. Fix a prime $\mathfrak p$ of $k$ of good reduction for $A$ with residual characteristic $p$. We have the following inclusions of division algebras:
\[
    \xymatrix{
    D_{/F} \ar[r] \ar@/_2pc/[rr]_{\iota} & 
    \End^0(A)_{/Z} \ar[r] &
    \End^0(A_{\mathfrak p})_{/\QQ(\pi)}.
    }
\]
We focus on the embedding $\iota\colon D\hookrightarrow \End^0(A_{\mathfrak p})$. Suppose that $A_{\mathfrak p}$ is simple: in that case, $\End^0(A_{\mathfrak p})$ is a division algebra, and we want to see that $\End^0(A)$ ramifies at some prime over $p$. We may apply Theorem~\ref{theorem:main-theorem-algebras} with $K=\QQ(\pi)$ and $B=\End^0(A_{\mathfrak p})$. We denote by $F(\pi)$ the compositum of $\iota(F)$ and $\QQ(\pi)$ in $\End^0(A_{\mathfrak p})$. Then, the existence of the embedding $\iota$ implies that
\[
	d := \frac{\ord_{\QQ(\pi)}[\End^0(A_{\mathfrak p})]}{[F(\pi):\QQ(\pi)]}
\]
is divisible by $\ell$ exactly once, and
\begin{equation}\label{eq:D-EndAp}
	\frac{d}{\ell}[D\otimes_F F(\pi)] = \frac{d}{\ell}[\End^0(A_{\mathfrak p})\otimes_{\QQ(\pi)}F(\pi)]
\end{equation} 
in $\Br(F(\pi))$.
With this $d$, we see that the Brauer class $\frac{d}{\ell}[D\otimes_F F(\pi)]$ is not trivial. Indeed, $F(\pi)$ does not split $D$, so $D\otimes_F F(\pi)$ has Schur index $\ell$, while $\frac{d}{\ell}$ is an integer coprime with $\ell$. Combining Equations \eqref{eq:D-EndA} and \eqref{eq:D-EndAp} we obtain an equality of non-trivial classes in $\Br(F(\pi))$,
\[
\frac{d}{\ell}[\End^0(A)\otimes_Z F(\pi)] = \frac{d}{\ell}[\End^0(A_{\mathfrak p})\otimes_{\QQ(\pi)} F(\pi)].
\]
By Proposition~\ref{proposition:dim-2}, we may assume that $\dim A>2$. Therefore, $\QQ(\pi)$ is a CM field by \cite[pg.~ 97]{tate71}. By \cite[Theorem~2]{tate1966}, the algebra $\End^0(A_{\mathfrak p})$ ramifies only at primes of $\QQ(\pi)$ over $p$. It follows that $\End^0(A)$ must ramify at some prime of $Z$ over $p$, this proves our theorem.
\end{proof}

\nocite{achter12}

\bibliographystyle{alpha}
\bibliography{references}
\end{document}